\newcommand{\Z}[0]{\mathbb{Z}}
\newcommand{\C}[0]{\mathbb{C}}
\newcommand{\F}[0]{\mathbb{F}}
\newtheorem{claim}{Claim}
\newtheorem{thm}[claim]{Theorem}
\newtheorem{lemma}[claim]{Lemma}
\newtheorem{prop}[claim]{Proposition}
\newtheorem{cor}[claim]{Corollary}
\theoremstyle{definition}
\newtheorem{definition}{Definition}
\theoremstyle{remark}
\newtheorem*{rem}{Remark}
\theoremstyle{definition}
\newtheorem{example}{Example}[section]
\title{Uniform Recurrence in the Motzkin Numbers and Related Sequences mod $p$}
\author{Nadav Kohen}
\affil{Indiana University\\Bloomington, IN, USA\\nkohen@iu.edu}
\date{February 22, 2024}
\begin{document}
\maketitle
\begin{abstract}
Many famous integer sequences including the Catalan numbers and the Motzkin numbers can be expressed in the form $ConstantTermOf\left[P(x)^nQ(x)\right]$ for Laurent polynomials $Q$, and symmetric Laurent trinomials $P$. In this paper we characterize the primes for which sequences of this form are uniformly recurrent modulo $p$. For all other primes, we show that $0$ has density $1$. This will be accomplished by showing that the study of these sequences mod $p$ can be reduced to the study of the generalized central trinomial coefficients, which are well-behaved mod $p$.
\end{abstract}

\section{Introduction}
The Motzkin numbers (A001006 of \cite{oeis}), $M_n$, count the number of lattice paths from the origin to $(n, 0)$ which do not go below the x-axis with steps $U = (1, 1), L = (1, 0)$, and $D = (1, -1)$. See \cite{motzkinsurvey} for many other combinatorial settings in which the Motzkin numbers arise. Some work has been done to characterize $M_n$ and similar sequences modulo various prime powers. For example, Deutsch and Sagan \cite{deutschsagan} characterized $M_n\mod 3$ as well as showing for which $n$, $M_n\equiv 0$ mod $p$ for $p=2,4,5$.\\

In recent years, much of this work has utilized Rowland and Zeilberger's finite automaton \cite{rowlandzeilberger} which encodes the behavior of any sequence of the form $ct\left[P^nQ\right]\mod p$ where $ct$ stands for ``constant term of,'' and $P$ and $Q$ are Laurent polynomials (possibly in multiple variables). Burns \cite{smallprimesmotzkin} has used these automata to study the asymptotic behavior of $M_n$ mod small primes. And Rampersad and Shallit \cite{CongruenceAutomaton} have used these automata alongside the automatic theorem prover Walnut \cite{walnut} (finite state automata have a decidable first-order theory) to re-prove Deutsch and Sagan's results, as well as showing that $M_n\mod 5$ is uniformly recurrent (see Definition \ref{uf}), and various other congruence properties of the Motzkin numbers, Catalan numbers, and central trinomial coefficients.\\

The central trinomial coefficients (A002426 of \cite{oeis}) are $T_n = ct\left[(x^{-1} + 1 + x)^n\right]$, and are related to the Motzkin numbers since $M_n = ct\left[(x^{-1} + 1 + x)^n(1 - x^2)\right]$. From this, one can derive that $2M_n = 3T_n + 2T_{n+1} - T_{n+2}$ (see section \ref{AppSeq}). In this paper, we call sequences of the form $a_n = ct\left[(\alpha_{-1}x^{-1} + \alpha_0 + \alpha_1x)^n\right]$ \emph{generalized central trinomial coefficients}.\\

In Problem 6 of \cite{CongruenceAutomaton}, Rampersad and Shallit ask for a characterization the primes, $p$, for which the Motzkin numbers mod $p$ are uniformly recurrent (Problem 6 in \cite{CongruenceAutomaton}). Based on Burns' results for small primes in \cite{smallprimesmotzkin}, Rampersad and Shallit conjecture that the answer will be the sequence A113305 of \cite{oeis} of primes not dividing any central trinomial coefficient, and that for all other $p$, the Motzkin numbers mod $p$ will be $0$ with density $1$.\\

In this paper, we confirm these conjectures (Theorem \ref{main}) by showing that the set of primes, $p$, for which any sequence which arises as an integral linear combination of generalized central trinomial coefficients are uniformly recurrent mod $p$ is the set of primes which do not divide any of the corresponding generalized central trinomial coefficients. In particular, the Motzkin numbers are of this required form. Furthermore, we confirm that for all other $p$, sequences of this form are $0$ mod $p$ with density $1$.\\

In the case that $p$ does divide a central trinomial coefficient, our approach is to utilize the fact that generalized central trinomial coefficients mod $p$ are determined independently by the digits in their base $p$ expansions (Proposition \ref{lucas}) and thus $p$ divides one of the first $p$ coefficients and also any index whose base $p$ expansion contains that digit. This forces the set of indices for which $p$ divides that central trinomial coefficient to have density $1$. This also forces there to be arbitrarily long runs of $0$s which inhibits uniform recurrence.\\

In the case that $p$ does not divide any central trinomial coefficient, our approach is again to utilize Proposition \ref{lucas} to see that in any integral linear combination, the prefix of base $p$ digits which all of the indices $n,n+1,\ldots,n+h$ have in common can be factored out. And since individual central trinomial coefficients mod $p$ recur within a constant bound (Lemma \ref{length_one}), in most cases we can force any word in our general sequence to recur by adding only to the prefixes shared by all indices of central trinomial coefficients involved.\\

Lastly, in section \ref{AppSeq} we show that sequences of the form $b_n = ct\left[P(x)^nQ(x)\right]$, where $P$ is a symmetric Laurent trinomial and $Q$ is any Laurent polynomial, can be written as combinations of $ct\left[P(x)^{n+i}\right]$ for various $i$, so that our results apply to all sequences of this form. If we let $V_k$ denote the $k$-dimensional irreducible representation of $SU(2,\C)$, then the number of irreducible components of dimension $d$ in $\left(V_1^{m_1}\oplus V_2^{m_2}\right)^{\otimes n}$ yields a sequence, $b_n^{d,m_1,m_2}$, of the above form for every $d,m_1$, and $m_2$ in $\Z_{\geq 0}$. Namely, $b_n^{d,m_1,m_2} = ct\left[P(x)^nQ(x)\right]$ where $P(x) = m_2x^{-1} + m_1 + m_2x$ and $Q(x) = x^{d-1} - x^{d-3}$. For example, $b_n^{1,1,1}$ are the Motzkin numbers and $b_n^{1,2,1}$ are the Catalan numbers.

\subsection{Notation and Conventions}
Throughout this paper, $P(x)$ denotes a Laurent trinomial of the form $\alpha_{-1}x^{-1} + \alpha_0 + \alpha_1x$ with $\alpha_i\in\Z$, $Q(x)$ denotes an arbitrary Laurent polynomial with integer coefficients, and $ct\left[Q(x)\right]$ denotes the constant term of $Q(x)$. For a fixed $P(x)$, we let $a_n$ denote the sequence $ct\left[P(x)^n\right]$.\\

If $\Sigma$ is a set, $\Sigma^*$ denotes the set of words (i.e. strings) of any length whose characters are from $\Sigma$ (including the empty word). If $n$ is a non-negative integer, and $p$ is a prime then let $n_p\in\F_p^*$ be the word whose characters are the digits of $n$ in base $p$. That is, if we let $n_p[i]$ denote the $i$th digit in the base $p$ expansion of $n$ so that $n = \sum_{i\in\Z_{\geq 0}} n_p[i]p^i$, then $n_p = (n_p\left[\lfloor\log_p n\rfloor\right])\cdots(n_p[1])(n_p[0])$. Note that when working with strings, exponents are used to denote repetition, for example, $(p-1)^k$ denotes a run of $k$ characters that are all the character $(p-1)$. Also note that any statement made in this paper about $n_p$ should also hold for $0^kn_p$ for any $k$.

This paper is primarily focused on showing when sequences mod primes are uniformly recurrent, which can be thought of as a weaker form of periodicity:
\begin{definition}\label{uf} A sequence $s_n$ is called \emph{uniformly recurrent} if for every word (i.e. contiguous subsequence) $w = s_is_{i+1}\cdots s_{i+\ell-1}$, there is a constant $C_w$ such that every occurrence of $w$ is followed by another occurrence of $w$ at distance at most $C_w$. I.e. there is a $j\leq C_w$ such that $w = s_{i+j}s_{i+j+1}\cdots s_{i + j + \ell - 1}$.
\end{definition}
\section{Central Trinomial Coefficients}
\begin{prop}\label{lucas}
For any prime $p$, the generalized central trinomial coefficients, $a_n = ct\left[P(x)^n\right]$, satisfy $a_n\equiv \prod a_{n_p[i]}\mod p$.
\end{prop}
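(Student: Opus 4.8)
The plan is to reduce everything to the Frobenius endomorphism on $\F_p[x,x^{-1}]$ together with a base-$p$ ``no-carry'' argument. First I would record that reduction mod $p$ intertwines the $p$-th power map with the substitution $x\mapsto x^p$: since $\alpha^p\equiv\alpha\pmod p$ for every integer $\alpha$ by Fermat's little theorem, and the multinomial coefficients appearing in $(\,\cdot\,)^p$ all vanish mod $p$ except on the pure powers, one obtains $P(x)^p\equiv P(x^p)\pmod p$. Iterating gives $P(x)^{p^i}\equiv P(x^{p^i})\pmod p$ for every $i$.

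Next I would expand $n=\sum_i n_p[i]\,p^i$ in base $p$ and factor accordingly,
\[
P(x)^n=\prod_i\left(P(x)^{p^i}\right)^{n_p[i]}\equiv\prod_i P\!\left(x^{p^i}\right)^{n_p[i]}\pmod p,
\]
so the problem becomes computing the constant term of a product whose $i$-th factor is a Laurent polynomial in $x^{p^i}$ of degree at most $n_p[i]$ in both $x^{p^i}$ and $x^{-p^i}$.

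The crux, and the step I expect to be the main obstacle, is showing that the constant term of this product equals the product of the constant terms of the factors, i.e.\ that no cross-combination of nonconstant terms can produce total exponent zero. A monomial in the product has $x$-exponent $\sum_i p^i e_i$ with each $e_i$ an integer satisfying $|e_i|\le n_p[i]\le p-1$, and I would argue that $\sum_i p^i e_i=0$ forces every $e_i=0$. This is uniqueness of base-$p$ representations in disguise: letting $j$ be the least index with $e_j\neq 0$, the relation $\sum_{i\ge j}p^i e_i=0$ gives $e_j=-p\sum_{i>j}p^{\,i-j-1}e_i$, so $p\mid e_j$, which is incompatible with $0<|e_j|\le p-1$. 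The degree bound $|e_i|\le n_p[i]\le p-1$ is exactly what makes this separation work; without the constraint that each digit is at most $p-1$ the cross terms could conspire, so this is where the base-$p$ structure is essential.

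Hence the only contribution to the constant term comes from selecting the constant term of each factor, giving
\[
ct\!\left[P(x)^n\right]\equiv\prod_i ct\!\left[P\!\left(x^{p^i}\right)^{n_p[i]}\right]=\prod_i ct\!\left[P(x)^{n_p[i]}\right]=\prod_i a_{n_p[i]}\pmod p,
\]
where the middle equality holds because the substitution $x\mapsto x^{p^i}$ fixes the constant term. Since $a_0=ct[1]=1$, the finitely many leading zero digits contribute trivially and the product is well defined, completing the argument.
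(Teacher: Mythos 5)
Your proof is correct, and it rests on the same two pillars as the paper's --- the Frobenius congruence $P(x)^p\equiv P(x^p)\pmod p$ and the separation of constant terms across base-$p$ scales --- but it organizes them differently. The paper inducts on the number of digits, peeling off the least significant one: writing $n = qp + n_p[0]$, it factors $P(x)^n \equiv P(x^p)^q\,P(x)^{n_p[0]}\pmod p$, splits the constant term with only the parenthetical justification ``$n_p[0]<p$ so there will be no cancellation,'' and recurses. You instead factor over all digits simultaneously, $P(x)^n \equiv \prod_i P(x^{p^i})^{n_p[i]}\pmod p$, and then prove the no-cancellation claim outright: a vanishing total exponent $\sum_i p^i e_i = 0$ with $|e_i|\le n_p[i]\le p-1$ forces every $e_i = 0$, via the least-nonzero-index argument showing $p\mid e_j$. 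That is precisely the uniqueness-of-base-$p$-representations fact (with balanced digits) underlying the paper's one-line parenthetical, so your version is, if anything, more complete: the step the paper asserts is the one you isolate and verify, and your single global factorization replaces the induction. The trade-off is symmetric --- the paper's induction only ever needs the two-factor case (exponents that are multiples of $p$ against exponents in the interval $(-p,p)$), which is why it can get away with a one-word justification, while your all-at-once argument requires the general multi-scale statement but dispenses with induction entirely; both correctly note that leading zeros are harmless since $a_0 = 1$.
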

\begin{proof}
We induct on the number of digits in $n_p$. Certainly if $n = n_p[0] < p$ then $a_n = a_{n_p[0]}$. Otherwise, if $n = qp + n_p[0]$, then 
\begin{align*}
a_n &= ct\left[P(x)^{qp + n_p[0]}\right]\\
&\equiv ct\left[P(x^p)^qP(x)^{n_p[0]}\right]\mod p &\left(P(x)^p\equiv P(x^p)\mod p\right)\\
&= ct\left[P(x^p)^q\right]ct\left[P(x)^{n_p[0]}\right] &\left(n_p[0]<p\text{ so there will be no cancellation}\right)\\
&= ct\left[P(x)^q\right]ct\left[P(x)^{n_p[0]}\right] &\left(ct\left[P(x^k)^n\right] = ct\left[P(x)^n\right]\right)\\
&= a_qa_{n_p[0]}\\
&= \prod a_{n_p[i]} &(\text{by induction, since }q_p\text{ has fewer digits than }n_p).
\end{align*}
\end{proof}

This is why the central trinomial coefficients, A002426 of \cite{oeis}, satisfy this Lucas congruence (see \cite{LucasCongruence}) since they are defined by $T_n = ct\left[(x^{-1} + 1 + x)^n\right]$. However, in the case that $\alpha_0 = 0$, we usually want to discuss the sequence $ct\left[(\alpha_{-1}x^{-1} + \alpha_1x)^{2n}\right]$ since the odd powers all have $0$ constant term. But this is no issue since $ct\left[(\alpha_{-1}x^{-1} + \alpha_1x)^{2n}\right] = ct\left[(\alpha_{-1}^2x^{-1} + 2\alpha_{-1}\alpha_1 + \alpha_1^2x)^{n}\right]$ (for example, the central binomial coefficients, A000984 of \cite{oeis}, are $B_n = ct\left[(x^{-1} + 2 + x)^n\right]$). This gives us,

\begin{cor}
If $b_n = ct\left[P(x)^{2n}\right] = a_{2n}$, then $b_n$ also satisfies the congruence $b_n\equiv \prod b_{n_p[i]}\mod p$.
\end{cor}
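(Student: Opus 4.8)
The plan is to recognize that in the situation of interest --- namely $\alpha_0 = 0$, where the odd-indexed constant terms $a_{2k+1}$ all vanish and so $b_n = a_{2n}$ is the sequence worth studying --- the sequence $b_n$ is \emph{itself} a sequence of generalized central trinomial coefficients, so that Proposition \ref{lucas} applies to it verbatim. Concretely, I would set $\tilde{P}(x) = \alpha_{-1}^2 x^{-1} + 2\alpha_{-1}\alpha_1 + \alpha_1^2 x$, which is again a Laurent trinomial with integer coefficients of exactly the form to which Proposition \ref{lucas} applies.

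The one identity I would invoke is the one established in the paragraph preceding the statement: since $P(x) = \alpha_{-1}x^{-1} + \alpha_1 x$ contains only the $x^{\pm 1}$ terms, squaring gives $P(x)^2 = \alpha_{-1}^2 x^{-2} + 2\alpha_{-1}\alpha_1 + \alpha_1^2 x^2$, which involves only even powers of $x$; substituting $x^2 \mapsto x$ therefore does not disturb the constant term (this is exactly the fact $ct\left[\tilde{P}(x^2)^n\right] = ct\left[\tilde{P}(x)^n\right]$ already used inside the proof of Proposition \ref{lucas}), yielding $b_n = ct\left[P(x)^{2n}\right] = ct\left[(P(x)^2)^n\right] = ct\left[\tilde{P}(x)^n\right]$. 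Thus $b_n$ is literally the generalized central trinomial coefficient sequence attached to the trinomial $\tilde{P}$.

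With this identification in hand, I would simply apply Proposition \ref{lucas} to $\tilde{P}$ in place of $P$. That proposition gives $ct\left[\tilde{P}(x)^n\right] \equiv \prod_i ct\left[\tilde{P}(x)^{n_p[i]}\right]\mod p$, which upon rewriting each factor as $ct\left[\tilde{P}(x)^{n_p[i]}\right] = b_{n_p[i]}$ is exactly $b_n \equiv \prod_i b_{n_p[i]}\mod p$, completing the proof.

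I do not expect a genuine obstacle here; the real content is already carried by Proposition \ref{lucas} and by the squaring identity. The only thing to be careful about is \emph{not} attempting to derive the congruence directly from the one for $a_n$ by writing $b_{n_p[i]} = a_{2n_p[i]}$ and comparing with $a_{2n} \equiv \prod_j a_{(2n)_p[j]}$: the base-$p$ digits of $2n$ are not the doubled digits of $n$ once carries occur, and indeed for trinomials with $\alpha_0 \neq 0$ the analogous statement fails. Routing the argument through $\tilde{P}$ sidesteps this entirely.
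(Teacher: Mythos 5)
Your proposal is correct and is exactly the paper's argument: the corollary is stated immediately after (and proved by) the identity $ct\left[(\alpha_{-1}x^{-1} + \alpha_1x)^{2n}\right] = ct\left[(\alpha_{-1}^2x^{-1} + 2\alpha_{-1}\alpha_1 + \alpha_1^2x)^{n}\right]$, so that $b_n$ is itself a generalized central trinomial coefficient sequence and Proposition \ref{lucas} applies verbatim, which is precisely your route through $\tilde{P}$. Your closing caution about not arguing via the base-$p$ digits of $2n$ (which fails once carries occur) is a sound observation consistent with the paper's setup, where the corollary lives in the $\alpha_0 = 0$ context.
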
\qed

\section{Combinations of Central Trinomial Coefficients}
Here we will characterize the primes, $p$, for which generalized central trinomial coefficients, $a_n = ct\left[P(x)^n\right]$, are uniformly recurrent mod $p$. This is nearly accomplished in \cite{CongruenceAutomaton}, but here we do away with the assumption that one of $\{a_0,a_1,\ldots,a_{p-1}\}$ needs to be a primitive root. Additionally, the proof has been extended so as to characterize the primes for which any integral linear combination of $a_{n+i}$ is uniformly recurrent mod $p$, where the characterization will be independent of the linear combination given.\\

In particular, weighted Motzkin sequences \cite{WeightedMotzkin} (including the standard Motzkin sequence) can be written as integral linear combinations of generalized central trinomial coefficients, so our results will apply to these sequences.\\

We begin with the case where our sequences will not be uniformly recurrent:

\begin{example}\label{mot3}
The Motzkin numbers satisfy $2M_n = 3T_n + 2T_{n+1} - T_{n+2}$ where $T_n = ct\left[(x^{-1} + 1 + x)^n\right]$ (see section \ref{AppSeq} or \cite{motzkincentralid}). For $p>2$, $M_{p,n} = 2^{-1}(3T_n + 2T_{n+1} - T_{n+2})$ gives us a sequence congruent to $M_n$ mod $p$ where $2^{-1}$ is the multiplicative inverse of $2$ mod $p$.\\

Consider $p = 3$ so that $p\mid T_2 = 3$. Then by Proposition \ref{lucas}, $T_n\equiv 0$ any time that $n_p$ contains a $2$. This in turn implies that any time all three of $n_p, (n+1)_p, (n+2)_p$ contain a $2$, then all three of $T_n,T_{n+1}, T_{n+2}\equiv 0$ and thus $M_n\equiv 0$. Thus, to find a run of $0$s in $M_n\mod 3$ of length at least $3^{k-1}$, we can use the fact that for every integer, $n$, in $[2(3)^k, 2(3)^k + 3^{k-1}]$, all three of $n,n+1,n+2$ have a $2$ in their base $3$ representations, so long as $k>1$ (If $k = 1$ and $m = 2(3)^1 + 3^{1-1} = 2(3) + 1$, then $m+2 = 3^2$).
\end{example}

\begin{prop}\label{zeros}
If $p$ is a prime dividing some element of $a_n$, and if $b_n = \sum_{i = 0}^h c_ia_{n+i}$ where $c_i\in\Z$, then $b_n\mod p$ has arbitrarily large runs of $0$s. Thus, $b_n\mod p$ is not uniformly recurrent. In particular, these statements hold for $b_n = 1\cdot a_n$.
\end{prop}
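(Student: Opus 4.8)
The plan is to reduce the hypothesis ``$p$ divides some $a_m$'' to a single ``bad digit'' and then build explicit runs of zeros using the Lucas-type congruence of Proposition~\ref{lucas}. First I would observe that since $p \mid a_m$ and $a_m \equiv \prod_i a_{m_p[i]} \pmod p$ with $p$ prime, $p$ must divide one of the single-digit factors; that is, there is a digit $d \in \{0,1,\dots,p-1\}$ with $p \mid a_d$. Because $a_0 = ct\left[P(x)^0\right] = 1$, we have $p \nmid a_0$, so in fact $1 \le d \le p-1$. Proposition~\ref{lucas} then yields the crucial consequence: $a_n \equiv 0 \pmod p$ whenever the base-$p$ expansion $n_p$ contains the digit $d$, since $a_d$ then appears as a factor of $\prod_i a_{n_p[i]}$.

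Next I would exhibit arbitrarily long runs of consecutive integers all containing $d$. For each $k$, consider the interval $I_k = [\,d\cdot p^k,\ (d+1)p^k - 1\,]$. Since $1 \le d \le p-1$, every $n \in I_k$ has $n_p[k] = d$ with no carry into higher positions, so $a_n \equiv 0 \pmod p$ for all $n \in I_k$; this is exactly the mechanism used in Example~\ref{mot3}. To pass to $b_n = \sum_{i=0}^h c_i a_{n+i}$, note that if $n, n+1, \dots, n+h$ all lie in $I_k$, then every summand $a_{n+i} \equiv 0$, whence $b_n \equiv 0$. Such $n$ fill the subinterval $[\,d p^k,\ (d+1)p^k - 1 - h\,]$, of length $p^k - h$. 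Taking $k$ with $p^k > h$ and letting $k \to \infty$ produces runs of zeros in $b_n \bmod p$ of unbounded length, which establishes the first assertion.

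Finally, I would deduce the failure of uniform recurrence, assuming $b_n$ is not identically $0 \pmod p$ (for $b_n = a_n$ this holds because $a_0 = 1$). If some value $v \ne 0$ occurs, then the single-character word $w = v$ occurs at some position, and uniform recurrence would force a constant bound $C_w$ on the gap between consecutive occurrences of $w$; consequently every window of $C_w$ consecutive terms past the first occurrence of $w$ would contain $v$, so no run of zeros could have length $\ge C_w$. Since our zero-runs are both arbitrarily long and located arbitrarily far out (inside $I_k$ with $dp^k \to \infty$), this is a contradiction.

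The computations here are light; the only points requiring care are that the offset $h$ in the linear combination forces one to work inside a single interval $I_k$ rather than merely requiring $d$ to appear somewhere, so that all $h+1$ shifted indices contain $d$ simultaneously, and that the deduction of non-recurrence needs the mild nondegeneracy that $b_n$ is not the zero sequence mod $p$.
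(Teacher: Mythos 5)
Your proof is correct and takes essentially the same route as the paper's: Proposition~\ref{lucas} reduces the hypothesis to a single nonzero digit $d$ with $p \mid a_d$, and placing all of $n, \dots, n+h$ inside $[\,dp^k, (d+1)p^k - 1\,]$ forces every summand $a_{n+i}$, hence $b_n$, to vanish, yielding zero-runs of unbounded length (the paper anchors its runs at $zp^k$ and conservatively claims length $p^{k-1}$, a cosmetic difference; its prefix-factoring device is only really needed later, for Proposition~\ref{density} and Theorem~\ref{main}). Your explicit caveat that deducing failure of uniform recurrence requires $b_n \not\equiv 0 \pmod{p}$ as a sequence is a genuine refinement the paper's ``Thus'' silently glosses over, since the identically-zero sequence is uniformly recurrent despite its arbitrarily long zero-runs.
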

\begin{proof}
Let $0 < z < p$ be an integer such that $a_z \equiv 0\mod p$. Because $b_n = \sum_{i = 0}^h c_ia_{n+i}\equiv \sum_{i=0}^h\left(c_i\cdot\prod a_{(n+i)_p[j]}\right)\mod p$ by Proposition \ref{lucas}, any prefix of base $p$ digits that all of the indices $n,\ldots,n+h$ share, say $a_{n_x}$ up to $a_{n_y}$, can be factored out of this sum so that we have
\begin{align*}
b_n &\equiv \sum_{i=0}^h\left(c_i\cdot\prod a_{(n+i)_p[j]}\right)\\
&= \sum_{i=0}^h\left(c_i\cdot\prod_{j=x}^y a_{n_p[j]}\prod_{j<x}a_{(n+i)_p[j]}\right)\\
&= \prod_{j=x}^y a_{n_p[j]}\left(\sum_{i=0}^h\left(c_i\cdot\prod_{j < x} a_{(n+i)_p[j]}\right)\right).
\end{align*}
In particular, we have that if $n_p[j] = z$ for any $x\leq j\leq y$, then $b_n\equiv 0\mod p$. Therefore, for sufficiently large integers $k$ (relative to $\log_p h$), $z\cdot p^k$ marks the beginning of a run of $0$s mod $p$ of length at least $p^{k-1}$ since $\left(z\cdot p^k\right)_p = z0^k$ (i.e. $z$ followed by $k$ $0$s). And $p^{k-1}$ can be made arbitrarily large.
\end{proof}

\begin{prop}\label{density}
If $p$ is a prime dividing some element of $a_n$, and if $b_n = \sum_{i = 0}^h c_ia_{n+i}$ where $c_i\in\Z$, then $0$ has density $1$ in the sequence $b_n\mod p$.
\end{prop}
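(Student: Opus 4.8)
The plan is to reuse the zero criterion already isolated in the proof of Proposition \ref{zeros} and then turn it into a counting statement about base $p$ digit patterns. Since $p$ divides some $a_n$, Proposition \ref{lucas} gives $a_n \equiv \prod a_{n_p[i]} \mod p$, so some single digit $z$ with $0 < z < p$ satisfies $a_z \equiv 0 \mod p$ (note $a_0 = ct[P(x)^0] = 1$, so $z \neq 0$). The factoring computation in Proposition \ref{zeros} shows that whenever a digit $n_p[j] = z$ lies in the contiguous block of high-order positions that all of $n, n+1, \ldots, n+h$ share, the common factor $a_z$ forces $b_n \equiv 0 \mod p$. Thus the entire problem reduces to showing that, for all but a density $0$ set of $n$, at least one \emph{shared} high-order digit of $n$ equals $z$; equivalently, that the bad set $\{n : b_n \not\equiv 0 \mod p\}$ has density $0$.

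To control which digits are shared, I would fix a threshold position $t$ with $p^t > h$. The elementary observation is that adding any $i \leq h$ to $n$ alters only the bottom $t$ digits unless it produces a carry out of position $t-1$, and such a carry occurs for some $i \leq h$ exactly when $n \bmod p^t \geq p^t - h$. Consequently, if $n \bmod p^t < p^t - h$, then every digit of $n$ at position $\geq t$ is common to all of $n, \ldots, n+h$, and it suffices that one such high digit equal $z$. I would then bound the bad set within $[0, p^K)$ by the union of (A) those $n$ with $n \bmod p^t \geq p^t - h$, a set of density exactly $h/p^t$, and (B) those $n$ whose digits at all positions $t, t+1, \ldots, K-1$ avoid $z$, a set of fractional size $(1 - 1/p)^{K-t}$. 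Given $\epsilon > 0$, I would first choose $t$ with $h/p^t < \epsilon/2$, and then choose $K$ large enough that $(1 - 1/p)^{K-t} < \epsilon/2$; this bounds the bad fraction below $\epsilon$ for all sufficiently large $K$, and comparing a general $N$ with the nearest power of $p$ upgrades this to density $0$ over all $N$.

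The one genuinely delicate point—the main obstacle—is the interaction between carries and the shared prefix. The low-order digits of $n$ are in general \emph{not} shared among the shifted indices $n+i$, so they cannot be used to manufacture the factor $a_z$, and in principle a carry can propagate arbitrarily far up the expansion. Step (A) is precisely the statement that long carries are rare, with density $h/p^t$ that can be pushed to $0$ by enlarging $t$, while step (B) captures that it is rare for the unboundedly many high-order digits to simultaneously avoid a fixed nonzero digit $z$. Balancing these two competing demands—$t$ large enough to suppress carries, yet the surviving high block long enough to almost certainly contain a $z$—via the ``choose $t$, then $K$'' order is the crux of the argument; the remaining estimates are routine.
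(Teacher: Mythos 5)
Your proof is correct, and its skeleton matches the paper's: extract a single digit $z$ with $p\mid a_z$ (using Proposition \ref{lucas} and $a_0=1$), invoke the shared-prefix factoring from the proof of Proposition \ref{zeros}, and count base-$p$ digit patterns to show the non-vanishing indices have density $0$. The genuine difference is your explicit carry analysis, and it is worth noting that this is \emph{more} careful than the paper's own argument. The paper fixes $\beta = \lfloor\log_p h\rfloor + 1$ and asserts that any digit $z$ among the top $k-\beta$ digits of $n_p$ is automatically part of the prefix shared by $n_p,\ldots,(n+h)_p$; this implication fails when adding some $i\leq h$ carries out of the bottom $\beta$ positions. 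For instance with $p=3$, $h=1$, $z=2$, $n=(0222)_3$: the high digits of $n$ contain a $2$, but $(n+1)_3=(1000)_3$, the shared prefix is empty, and $b_n = T_n + T_{n+1}\equiv 1 \bmod 3$ for the central trinomial coefficients. Moreover the carry set has density $h/p^\beta \geq 1/p$, which is not negligible, so the paper's count of at least $p^k - (p-1)^{k-\beta}p^\beta$ zeros is not fully justified as stated (though the limiting density-$1$ conclusion survives). Your step (A) — decoupling the threshold $t$ from the size of $h$ and choosing it so that the carry set has density $h/p^t < \epsilon/2$, before choosing $K$ with $\left(1-\frac{1}{p}\right)^{K-t} < \epsilon/2$ — is exactly the repair, at the modest cost of replacing the paper's explicit convergence rate $1 - \frac{p^\beta}{(p-1)^\beta}\left(\frac{p-1}{p}\right)^k$ with a qualitative $\epsilon$-argument; the final comparison of a general $N$ with the nearest power of $p$ is routine, as you say. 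In short: same strategy, but your two-parameter choice (first $t$, then $K$) closes a carry gap that the paper's one-parameter count leaves open.
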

\begin{proof}
As mentioned in the proof of Proposition \ref{zeros}, if any digit is $z$ (such that $p\mid a_z$) in a shared prefix of a run of indices, $n$ through $n+h$, then $b_n\equiv 0\mod p$. If $\beta = \lfloor\log_p(h)\rfloor + 1$, then consider the first $p^k$ terms of our sequence ($n=0,\ldots,p^k - 1$). For $k >\beta$, if any of the first $(k-\beta)$ digits of an index $n_p$ are $z$, then that $z$ must be part of the shared prefix (i.e. every string $n_p,\ldots,(n+h)_p$ have a $z$ in that position) and so $b_n\equiv 0$. So there are at least $p^k - (p-1)^{k-\beta}p^\beta$ of the first $p^k$ terms of $b_n$ which are divisible by $p$ (since there are $p-1$ choices for the first $k-\beta$ digits of $n_p$ which allow non-zero $b_n$), and so the proportion is at least $\frac{p^k - (p-1)^{k-\beta}p^\beta}{p^k} = 1 - \frac{p^\beta}{(p-1)^\beta}\left(\frac{p-1}{p}\right)^k\rightarrow 1$ as $k\rightarrow\infty$.
\end{proof}

This completes the characterization of what happens when $p\mid a_n$ for some $a_n$, we now turn to the case where $p\nmid a_n$ for all $n$, in which case our sequences will be uniformly recurrent. This result will usually boil down to using the fact that $a_n$ has uniform recurrence for words of length $1$:

\begin{lemma}\label{length_one}
If $p\nmid a_n$ for all $n$, then for every $n\in \Z_{\geq 0}$, there is an $n'\in\Z_{\geq 0}$ such that $n' > n$, $n' - n < p^{p^{(p-1)} + p + 1}$, and $a_n = a_{n'}$.
\end{lemma}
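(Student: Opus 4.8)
The goal is to show that if $p \nmid a_n$ for all $n$, then each value $a_n$ recurs within a uniformly bounded gap. By Proposition \ref{lucas}, $a_n \bmod p$ depends only on the multiset of base-$p$ digits of $n$ through the product $\prod_i a_{n_p[i]}$, where each factor $a_{n_p[i]}$ lies in the fixed finite set $\{a_0, a_1, \dots, a_{p-1}\} \subseteq \F_p^\times$ (the hypothesis $p \nmid a_n$ guarantees every factor is a unit). So the plan is to realize $a_n \bmod p$ as a function of how many times each residue class appears as a digit, and then show that we can always append or modify a bounded prefix of digits to return to the same product value.

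First I would set $u_d = a_d \bmod p$ for $0 \le d < p$, so that each $u_d \in \F_p^\times$, and write $a_n \equiv \prod_{i} u_{n_p[i]}$. Since $\F_p^\times$ is cyclic of order $p-1$, the value of the product is controlled by the exponent vector of digit-multiplicities taken modulo the orders of the $u_d$'s. The key structural fact I would extract is that multiplying the running product by $\prod_d u_d^{e_d}$ for some nonnegative exponents $e_d$ corresponds to \emph{prepending} (or inserting into the leading positions of) a block of digits containing each digit $d$ exactly $e_d$ times — and crucially, because any statement about $n_p$ also holds for $0^k n_p$, we are free to pad with high-order digits without disturbing the existing low-order digits that determine $a_n$. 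So to find $n' > n$ with $a_{n'} = a_n$, it suffices to find a short block of leading digits whose contributed factor $\prod_d u_d^{e_d}$ equals $1$ in $\F_p^\times$, while ensuring $n' > n$ strictly.

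The heart of the argument is a counting/pigeonhole step bounding the length of the needed block. Consider prepending a single repeated digit $d$: the factor contributed is $u_d^{k}$, which cycles with period dividing $p-1$ as $k$ increases, so some specific digit repeated at most $p-1$ times yields factor $1$. More carefully, to guarantee strict increase $n' > n$ while keeping the factor equal to $1$, I would prepend a block built from the available digits so that its total multiplicative contribution is the identity; the number of leading positions required is what must be bounded by $p^{p^{(p-1)} + p + 1}$. I expect the clean way to get such a bound is to track the partial products of prefixes: as we read successive leading blocks, the pair (current product in $\F_p^\times$, relevant carry/order data) ranges over a set whose size is a tower-type expression in $p$, so by pigeonhole the product value $1$ recurs — equivalently, some bounded insertion returns us to the same residue — within the stated bound.

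The main obstacle, and the step I would be most careful about, is simultaneously guaranteeing \emph{three} things with one construction: that $a_{n'} = a_n$ exactly (factor contributed equals the identity in $\F_p^\times$), that $n' > n$ strictly (so the block cannot be empty and must increase the integer value), and that the gap $n' - n$ respects the explicit bound $p^{p^{(p-1)} + p + 1}$. The tension is between the multiplicative constraint (which lives mod $p-1$ in the exponents) and the additive size constraint (which is about the magnitude $n' - n$ in base $p$); a block that fixes the product can naively be forced to be long. I would resolve this by working with the orders of the elements $u_d$ in $\F_p^\times$ and choosing the smallest repeated-digit or combined block achieving the identity, then verifying that the worst case over all digit-multiplicity configurations — of which there are at most $p^{p}$-many relevant residue patterns, compounded across the up-to-$p-1$ orders — stays under the tower bound. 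Confirming that the somewhat generous exponent $p^{(p-1)} + p + 1$ indeed dominates this worst case is the calculation I would expect to require the most care, though it should not be deep.
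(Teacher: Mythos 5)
There is a genuine gap, and it sits exactly where you flagged the tension. Your core move is to prepend (or insert at the leading positions of $n_p$) a block of digits whose multiplicative contribution is $1$ in $\F_p^\times$. But any modification at the leading end of $n_p$ changes $n$ by at least $p^{\lfloor\log_p n\rfloor}$, an amount that grows without bound as $n$ grows, whereas the lemma demands the \emph{uniform} gap $n' - n < p^{p^{(p-1)} + p + 1}$, independent of $n$. Minimizing the length of the prepended block, or pigeonholing over partial products, cannot repair this: it is the \emph{position} of the modified digits, not how many there are, that drives the size of $n' - n$. The same objection kills insertion anywhere above a bounded window, since inserting a block below the high part $n^*$ multiplies that high part by a power of $p$. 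So the only admissible operation is to replace a suffix of bounded length by another string of the same length whose digit multiset contributes the same product mod $p$ and whose numerical value is strictly larger --- and that replacement problem is where all the actual work of the lemma lives; your proposal does not engage with it.

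The paper's proof is a case analysis on the last $p^{p-1}+1$ digits that solves exactly this replacement problem. If two digits in the window are out of descending order, swap them: same multiset, strictly larger number. If the window is descending, pigeonhole yields a run of $p-1$ equal digits; if its value $d \neq p-1$, replace the run by $(d+1)^{p-1}$, using Fermat's Little Theorem ($a_d^{p-1}\equiv a_{d+1}^{p-1}\equiv 1 \bmod p$, valid precisely because the hypothesis makes every $a_d$ a unit). The delicate case is when every such run consists of the digit $p-1$, so nothing in the window can be increased: the paper then reaches (possibly far) above the window to the first non-$(p-1)$ digit $m$, increments it, and zeroes out most of the $(p-1)$-run while reinserting $(m)(m+1)^{p-2}(p-1)^r$ near the bottom, so that all digit counts change by multiples of $p-1$ and Fermat again gives $a_n\equiv a_{n'}$. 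Although the altered positions can be arbitrarily high, this is a carry-style replacement --- like adding a small number across a long run of $(p-1)$s --- so $n'-n$ still lies below the stated bound. Your sketch contains none of these three mechanisms (the swap, the Fermat bump, the carry across $(p-1)$-runs), and some version of the third is unavoidable: for $n_p = (p-1)^K$ with $K$ huge, no bounded-suffix replacement can increase $n$ at all, since every digit in any window is already maximal.
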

\begin{proof}
Given $n$, write it as a word in $\{0,1,\ldots,p-1\}^*$ via its base $p$ expansion $n_p = n^*(n_p[p^{p-1}])\cdots (n_p[1])(n_p[0])$ where the leading $n_p[i]$ may be $0$ and $n^*\in\{0,1,\ldots,p-1\}^*$ may be the empty word. We will find $n'$ by adding only to this suffix (or slightly more) to achieve the bound.

Since the value of $a_n\mod p$ is independent of the order of the $n_p[i]$ by Proposition \ref{lucas}, if there exists any $i>j$ such that $n_p[i] < n_p[j]$, then we can let $n'$ be the result of switching the $i$th and $j$th (least significant) digits of $n$.

Otherwise we have that the digits $n_p[p^{p-1}]$ through $n_p[0]$ are descending, and by the pigeonhole principle there is some $i$ such that $n_p[i] = n_p[i-1] = \cdots = n_p[i-(p-2)]$. Because $p\nmid a_{n_p[i]}$, we can apply Fermat's Little Theorem to see that the contribution of these digits is $a_{n_p[i]}^{p-1}\equiv 1\mod p$. If there is some such $i$ as above with $n_p[i]\neq p-1$, then we can let $n'$ be the result of replacing these digits in $n$ with $n_p[i]+1$ (which will result in the same contribution to the product of Proposition \ref{lucas} of $a_{n_p[i] + 1}^{p-1}\equiv 1\mod p$).

Lastly, if there are only $i$ which begin runs of $n_p[i]$ of length $p-1$ with $n_p[i] = p-1$, then the word $n_p$ is of the form $n^{**}(n_p[k + \vert\gamma\vert])(p-1)^k\gamma$ where $k > p$ and $\gamma\in\{0,1,\ldots,p-2\}^*$ has length less than $p^{p-1} - p$, and $n_p[k + \vert\gamma\vert]\neq p-1$ is the first non-$(p-1)$ digit (from the right). In this case, if $k = q(p-1) + r$ with $r < p-1$, we can let $n'$ correspond to the word $(n')_p = n^{**}(n_p[k + \vert\gamma\vert] + 1)0^{(q-1)(p-1)}(n_p[k + \vert\gamma\vert])(n_p[k + \vert\gamma\vert] + 1)^{p-2}(p-1)^r\gamma$. Again using Proposition \ref{lucas}, that $p\nmid n_p[i]$ for any $i$, and Fermat's Little Theorem, it is clear that $a_n\equiv a_{n'}\mod p$.
\end{proof}

\begin{example}
To illustrate this last case, let $p=5$ and consider $a_n = T_n = ct\left[(x^{-1} + 1 + x)^n\right]$. Let $n_p = 12324^{678}333222111000$ so that $n^{**} = 123$, $\gamma = 333222111000$, $k = 678$, $q = 169$, $r = 2$, and $n_p[k + \vert\gamma\vert] = 2$. Then $(n')_p = 12330^{672}23^34^2333222111000$. Both $n$ and $n'$ have the same number of each digit mod $p-1 = 4$ so that by Proposition \ref{lucas} and Fermat's Little Theorem, they are congruent.
\end{example}

We will now give an example to motivate the approach for the proof our main theorem.

\begin{example}
Let $p=5$, $a_n = T_n$ and $b_n = M_{p,n} = 2^{-1}(3a_n + 2a_{n+1} - a_{n+2})$ as in Example \ref{mot3}. Consider $n = 75156245$ so that $n_p = 123214444443$, $(n+1)_p = 123214444443$, and $(n+2)_p = 123220000000$. There are two ways we can construct an $n'>n$ such that $b_n\equiv b_{n'}$: We can use the fact that each of $a_n,a_{n+1},a_{n+2}$ share a factor of $a_1a_2a_3a_2 = a_{192}$ (because $192_p = 1232$) from the shared prefix of these three indices, so we can use Lemma \ref{length_one} to add some value to this shared prefix; in this case, it just so happens that $a_{192}\equiv 3\equiv a_{199}$ so we can let $n' = n + 7(5)^8$. Alternatively, we can use Fermat's Little theorem to replace some multiple of $p-1$ of the $4$s, and use those positions to undo the effect of incrementing the first non-$4$. This turns $n_p = (1232)1(4444)443$ into $(n')_p = (1232)2(2221)443$. In either case, all three pairs $(n,n'), (n+1, n'+1), (n+2,n'+2)$ have the same number of each digit mod $p-1$ so that $b_n\equiv b_{n'}$. However, we have actually accomplished more than this, in our second approach using Fermat's Little Theorem, if we let $\Delta = n' - n$ then every index from $m_p = 123214444000$ to $123220000443$ satisfies $b_m\equiv b_{m+\Delta}$. And in our first approach, the same is true for every $m_p$ from $123200000000$ through $123244444444$. We use both of these ways of getting recurring runs in $b_n$, depending on the desired length of run, in the following proof.
\end{example}

\begin{thm}\label{main}
If $a_n = ct\left[P(x)^n\right]$ where $P(x) = \alpha_{-1}x^{-1} + \alpha_0 + \alpha_1x$, and if $b_n = \sum_{i = 0}^h c_ia_{n+i}$ where $c_i\in\Z$, then $b_n$ is uniformly recurrent mod $p$ if and only if $p$ does not divide any $a_n$ (which can be checked for $n < p$).
\end{thm}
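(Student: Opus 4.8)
The proof has two directions, and the parenthetical remark is immediate from Proposition \ref{lucas}: since $a_n\equiv\prod_i a_{n_p[i]}\bmod p$, the prime $p$ divides some $a_n$ exactly when it divides some $a_z$ with $0\le z<p$. One direction is already finished, since if $p$ divides some such $a_z$, then Proposition \ref{zeros} produces arbitrarily long runs of $0$s in $b_n\bmod p$, which rules out uniform recurrence. So the entire task is the converse: assuming $p\nmid a_z$ for all $0\le z<p$, prove that $b_n$ is uniformly recurrent mod $p$.

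My first step would be to reduce this to a window version of Lemma \ref{length_one}. Because $b_{m+\Delta}=\sum_{i=0}^h c_i a_{m+\Delta+i}$, a single shift $\Delta$ forces $b_{m+\Delta}\equiv b_m$ for every $m$ in a length-$\ell$ window as soon as $a_{k+\Delta}\equiv a_k\bmod p$ holds for every $k$ in the enlarged window $[n,n+\ell-1+h]$. Thus it suffices to show that for each length $L$ there is a constant $C(L)$, depending only on $L$ and the fixed $p$, so that for every $n$ some $0<\Delta\le C(L)$ satisfies $a_{k+\Delta}\equiv a_k$ for all $k\in[n,n+L-1]$. Applying this with $L=\ell+h$ gives a uniform recurrence bound $C_w$ for the word $w=b_n\cdots b_{n+\ell-1}$ that is independent of the occurrence $n$, which is exactly the definition. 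This is precisely the $L=1$ statement of Lemma \ref{length_one} promoted to whole windows.

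To manufacture $\Delta$, I would fix $T$ with $p^T$ safely larger than $L$ and split on whether the window reaches across a multiple of $p^T$. If it does not, then $n,\ldots,n+L-1$ agree in all digits at positions $\ge T$; writing $N$ for the integer spelled by those digits, Proposition \ref{lucas} factors $a_k\equiv a_N\cdot(\text{low part})$ uniformly across the window, and Lemma \ref{length_one}, used as a black box on $N$, supplies $N'>N$ with $N'-N$ bounded and $a_{N'}\equiv a_N$. Then $\Delta=(N'-N)p^T$ touches only positions $\ge T$, preserves each low part, and fixes every $a_k$ in the window. This shared-prefix mechanism, the first construction of the motivating example, handles all $n$ except the few lying within $L$ of a multiple of $p^T$.

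The remaining straddling case is where I expect the real work. There $n$ is forced to carry a long run of digits equal to $p-1$ just below position $T$, and, crucially, those digits are not constant across the window: as $k$ sweeps through the window it crosses the carry, the run of $(p-1)$s collapses to $0$s, and the digits above it increment, so the run cannot be absorbed into a shared prefix. Here I would imitate the final case of Lemma \ref{length_one} and the Fermat's-Little-Theorem construction of the example, rewriting a length-$(p-1)$ subrun of $(p-1)$s, which contributes $a_{p-1}^{p-1}\equiv 1$, so as to cancel the effect of incrementing the first non-$(p-1)$ digit, and arranging the edited positions so that the one shift $\Delta$ certifies $a_{k+\Delta}\equiv a_k$ simultaneously on both sides of the carry. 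The main obstacle is exactly this simultaneity: verifying that the Fermat bookkeeping stays valid as the low digits of $k$ pass through the carry boundary, and that the resulting $\Delta$ remains bounded purely in terms of $L$ and $p$. Once both cases deliver a window-uniform $\Delta\le C(L)$, the reduction of the second step closes out uniform recurrence.
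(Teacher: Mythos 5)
Your proposal follows essentially the same route as the paper: one direction is Proposition \ref{zeros}, and for the converse the paper likewise produces a single shift $\Delta$ preserving every $a_k \bmod p$ across the window, splitting into a shared-prefix case handled by incrementing the high digits via Lemma \ref{length_one}, and a carry-straddling case handled by exactly the Fermat's Little Theorem digit rewrite you describe, with the explicit choice $\Delta_p = (m+1)^{p-1}0^{\alpha(p-1)+r+\beta}$. The bookkeeping you flag as the remaining work --- aligning the collapsed run of $(p-1)$s so its length changes by a multiple of $p-1$, checking both sides of the carry simultaneously, and bounding $\Delta$ (which comes out as $O\left(\ell^{\frac{p}{p-1}}\right)$ rather than $O(\ell)$) --- is precisely what the paper's computation carries out.
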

\begin{proof}
One direction is Proposition \ref{zeros}, so we need only show that if $p\nmid a_n$ for any $n$ (so that Lemma \ref{length_one} applies) then $b_n$ is uniformly recurrent mod $p$.\\

First, a proof sketch: Given a word $w = (b_i\mod p)(b_{i + 1}\mod p)\cdots(b_{i+\ell - 1}\mod p)$ of length $\ell$, we wish to bound its next occurrence by using Lemma \ref{length_one} to increment a shared prefix of all of the indices appearing on $a_\bullet$ in the expansions of the $b_\bullet$ in $w$. Our main complication occurs when our indices on $b_\bullet$ are leading up to multiples of large powers of $p$ where many digits will differ between $i$ and $i+h$. If $k$ is the largest power of $p$ which has a multiple appear in an index of $a_\bullet$, then this approach of incrementing a prefix will be permissible if $\ell$ is very large (relative to $p^k$) or if $k < p$ so that we can be assured a recurrence of $w$ within some constant (very large) multiple of $\ell$ shifts. When $\ell$ is not large enough and $k > p - 1$, we will instead use Fermat's Little Theorem, similar to how it is used in the proof of Lemma \ref{length_one}, to achieve our recurrence within the bound of a constant (very large) multiple of $\ell^{\frac{p}{p-1}}$.\\

Let $\beta = \lfloor\log_p h\rfloor + 1$, which bounds the number of digits in $h_p$. We will black-box the last $\beta$ digits of our base $p$ expansions, and add a factor of $p^\beta$ to our constant multiples of $\ell$ to simplify our argument. Hence, we will use the convention that $\gamma\in\{0,\ldots, p-1\}^\beta$ (if $h=0$, then $\beta = 0$ and $\gamma$ is the empty word), and sometimes we allow $\gamma$ to be a variable from this domain.

Let $p^s$ be the largest power of $p$ which has a multiple in the interval $(i, i+h+\ell)$. Pick $\alpha\geq 1$ such that $p^{(\alpha - 1)(p-1) + \beta} - p^\beta\leq \ell < p^{\alpha(p-1) + \beta} - p^\beta$. If $s\geq \beta + p$, let $k = s - \beta$ and $k = q(p-1) + r$ with $r < p-1$. In this case, if $\alpha\geq q$, then
\begin{align*}
p^s &= p^{k+\beta}\\
&= p^{(q-1)(p-1) + \beta}p^{p-1+r}&(k = q(p-1) + r)\\
&\leq p^{p-1+r}(\ell + p^\beta)&(q\leq \alpha\text{ and }p^{(\alpha-1)(p-1) + \beta}\leq \ell + p^\beta)\\
&\leq p^{2p}(1 + p^\beta)\ell&(r < p\text{ and }\ell + p^\beta\leq \ell + \ell p^\beta)\\
&\leq p^{3p + \beta}\ell.&(1+p^\beta\leq p^{p+\beta})
\end{align*}
And if $s < \beta + p$, then clearly $p^s\leq p^{3p + \beta}\ell$ as well. So in either case, if we add to the prefix of all $a_\bullet$ appearing in expansions of characters of $w$ (i.e. add some multiple of $p^s$) then we are guaranteed a recurrence within $C\cdot p^s\leq C\cdot p^{3p + \beta}\ell$ shifts of our original $i$, where $C$ is a constant bounding the recurrence in our prefix mod $p$ (i.e. bounding the recurrence of $a_i$ in $a_\bullet$ for all $i$ in Lemma \ref{length_one}). We get this recurrence because adding $\Delta p^s$ increments the prefix of the base $p$ expansions of all indices by $\Delta$, and we can pick a $\Delta < C$ using Lemma \ref{length_one}.\\

Otherwise, we have $s \geq p + \beta$ ($k > p - 1$) and $\alpha < q$. Let $n$ be the first index such that $n+h$ is a multiple of $p^{k+\beta}$. Let $$n_p = n^*(m)(p-1)^k\gamma$$ where $n^*\in\{0,\ldots,p-1\}^*$ and $m\in\{0,\ldots,p-2\}$. We claim that shifting $w$ by the number whose base $p$ expansion is $\Delta_p = (m+1)^{p-1}0^{\alpha(p-1) + r + \beta}$ (which is a number less than $p^{(2 + \alpha)p + \beta}$) will result in a recurrence of $w$. Let's begin by inspecting $$(n + \Delta)_p = n^*(m+1)0^{(q-\alpha-1)(p-1)}(m+1)^{p-2}(m)(p-1)^{\alpha(p-1) + r}\gamma.$$ First note that, using Fermat's Little Theorem, $b_n\equiv b_{n + \Delta}$. Next note that $\ell < p^{\alpha(p-1) + \beta} - p^\beta$, so $i+\Delta,i+\Delta +1,\ldots, n + \Delta + h - 1$ all have the shared prefix $n^*(m+1)0^{(q-\alpha-1)(p-1)}(m+1)^{p-2}m$ whose contribution is the same as $n^*m$; meanwhile all of $n + \Delta + h,\ldots, i + \Delta + \ell - 1$ all have the shared prefix $n^*(m+1)0^{(q-\alpha-1)(p-1)}(m+1)^{p-1}$ whose contribution is the same as $n^*(m+1)$. Thus, we have a recurrence after at most $p^{(2 + \alpha)p + \beta} = p^{(\alpha - 1)(p-1) + \beta}p^{3p + \alpha - 1}\leq p^{3p + \alpha - 1}(\ell + p^\beta)\leq p^{3p + \alpha + \beta}\ell$ shifts. Therefore, in all of our cases, we have a recurrence of $w$ within at most $C\cdot p^{3p + \alpha + \beta}\ell$ shifts.
\end{proof}

\begin{rem}
$p^\alpha$ is bounded by some constant times $\ell^{\frac{1}{p-1}}$, so in total our recurrence bound is a constant times $\ell^{\frac{p}{p-1}}$. This bound has a much larger constant factor than the one observed in Theorem 5 of \cite{CongruenceAutomaton} ($200\ell$ for the Motzkin numbers mod $5$). Additionally, the bound here is slightly worse than being $O(\ell)$ as is observed in Theorem 5 of \cite{CongruenceAutomaton}, and the author suspects that in the case that one of the first $p$ elements of $a_n$ is a primitive root, then there is an alternative argument that uses inverses in place of Fermat's Little Theorem to achieve an $O(\ell)$ bound.
\end{rem}

Using our main result, we can now draw as corollaries a refinement of Theorem 10 from \cite{CongruenceAutomaton} as well as validate the conjecture of Problem 6 from \cite{CongruenceAutomaton} proving that Burns' observations in \cite{smallprimesmotzkin} hold in general.

\begin{cor}
The central trinomial coefficients mod $p$ are uniformly recurrent if and only if $p$ does not divide any of the central trinomial coefficients (which can be checked for $n < p$). Furthermore if $p$ does divide a central trinomial coefficient, then $0$ has density $1$ in the central trinomial coefficients mod $p$.
\end{cor}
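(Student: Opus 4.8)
The plan is to recognize that the central trinomial coefficients are precisely the instance of the general framework obtained by taking $P(x) = x^{-1} + 1 + x$, so that $T_n = ct\left[(x^{-1}+1+x)^n\right] = a_n$ in the notation of the paper. With this identification in hand, the corollary is nothing more than the specialization of the two main results to the single-term combination $b_n = a_n$, i.e.\ the case $h = 0$ and $c_0 = 1$.

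For the characterization of uniform recurrence, I would apply Theorem \ref{main} directly to $b_n = a_n$. This immediately yields that $T_n$ mod $p$ is uniformly recurrent if and only if $p$ divides no $a_n = T_n$. The parenthetical claim that this condition need only be checked for $n < p$ is inherited from the statement of Theorem \ref{main}, but it is worth recalling why it holds: by Proposition \ref{lucas} we have $T_n\equiv \prod T_{n_p[i]}\mod p$, and since every digit $n_p[i]$ is strictly less than $p$, the prime $p$ divides some $T_n$ exactly when it divides one of the factors $T_{n_p[i]}$, hence exactly when it divides some $T_j$ with $j < p$.

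For the density assertion, I would apply Proposition \ref{density} to the same sequence $b_n = a_n$. That proposition guarantees that whenever $p$ divides some $a_n$, the value $0$ occurs with density $1$ in $b_n$ mod $p$; setting $b_n = T_n$ produces exactly the stated conclusion. There is essentially no obstacle to overcome here, since every needed ingredient is already established; the only point requiring a moment's care is confirming that $T_n$ genuinely has the form $ct\left[P(x)^n\right]$ for a trinomial $P$ meeting the standing hypotheses, which is immediate from the defining formula. In short, the entire content of this corollary lives in Theorem \ref{main} and Proposition \ref{density}, and the proof amounts to instantiating both with $h = 0$.
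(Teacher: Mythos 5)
Your proposal is correct and matches the paper's own treatment: the corollary is stated there with no written proof precisely because it is the immediate specialization of Theorem \ref{main} and Proposition \ref{density} to $P(x) = x^{-1} + 1 + x$ with $h = 0$ and $c_0 = 1$. Your added remark explaining via Proposition \ref{lucas} why divisibility need only be checked for $n < p$ is a harmless (and accurate) elaboration of the same argument.
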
\qed
\begin{cor}
The Motzkin numbers are uniformly recurrent mod $p$ if and only if $p$ does not divide any central trinomial coefficients. Furthermore if $p$ does divide a central trinomial coefficient, then $0$ has density $1$ in the Motzkin numbers mod $p$.
\end{cor}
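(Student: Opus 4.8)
The plan is to read $M_n$ off the identity $2M_n = 3T_n + 2T_{n+1} - T_{n+2}$ from Example \ref{mot3}, in which $T_n = ct[(x^{-1}+1+x)^n]$ is exactly the generalized central trinomial coefficient $a_n$ attached to the symmetric trinomial $P(x) = x^{-1}+1+x$. For an odd prime $p$ the integer $2$ is a unit mod $p$, so I would pick integers $c_0,c_1,c_2$ reducing to $2^{-1}\cdot 3,\ 2^{-1}\cdot 2,\ -2^{-1}$ modulo $p$ and set $b_n = c_0 T_n + c_1 T_{n+1} + c_2 T_{n+2}$. Then $b_n \equiv M_n \pmod p$, and $b_n$ is precisely an integral linear combination of the $T_{n+i}$ of the type covered by Theorem \ref{main}. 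Since $M_n$ and $b_n$ agree as sequences over $\F_p$, they have identical factor structure, so uniform recurrence and the density of the letter $0$ transfer verbatim. Applying Theorem \ref{main} and Proposition \ref{density} to $b_n$ then yields, for every odd $p$: $M_n$ is uniformly recurrent mod $p$ iff $p$ divides no $T_n$, and otherwise $0$ has density $1$ in $M_n$ mod $p$.

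The remaining, and only delicate, case is $p = 2$, where this argument degenerates. By Proposition \ref{lucas} (with $T_0 = T_1 = 1$) we have $T_n \equiv 1 \pmod 2$ for all $n$, so $3T_n + 2T_{n+1} - T_{n+2} \equiv 0 \pmod 2$ carries no information about $M_n \bmod 2$, and dividing the identity by $2$ is unavailable. Fortunately the same computation shows that $2$ divides no central trinomial coefficient, so the asserted equivalence predicts only that $M_n \bmod 2$ is uniformly recurrent, and the density clause is vacuous. It therefore suffices to establish uniform recurrence of $M_n \bmod 2$ directly; I would do this by invoking the explicit description of $M_n \bmod 2$ due to Deutsch and Sagan \cite{deutschsagan} (equivalently, the automaton of \cite{rowlandzeilberger} exploited in \cite{CongruenceAutomaton}), from which uniform recurrence can be read off.

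Finally, the parenthetical ``which can be checked for $n<p$'' follows exactly as in Theorem \ref{main}: by Proposition \ref{lucas} each $T_n$ is congruent mod $p$ to a product of the values $T_0,\ldots,T_{p-1}$, and a prime divides such a product iff it divides one of these factors. The main obstacle is thus isolated to $p=2$: for every odd prime the statement is immediate from Theorem \ref{main} and Proposition \ref{density} through the congruence $M_n \equiv b_n$, whereas $p=2$ must be dispatched separately precisely because the one available central-trinomial identity for $M_n$ collapses to a triviality modulo $2$.
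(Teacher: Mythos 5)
Your proposal is correct and takes essentially the same approach as the paper: for odd $p$ you lift $2^{-1}(3T_n + 2T_{n+1} - T_{n+2})$ to an integral linear combination covered by Theorem \ref{main} and Proposition \ref{density}, and you isolate $p=2$ (where $2\nmid T_n$ makes the density clause vacuous and the identity degenerates) for a direct uniform-recurrence argument, just as the paper does. The only difference is one of detail: the paper spells out the $p=2$ recurrence explicitly from the automaton of \cite{CongruenceAutomaton}, taking the shift $\Delta$ to be one of $2^{\lfloor\log_2\ell\rfloor+1}$ or $2^{\lfloor\log_2\ell\rfloor+2}$, whereas you defer to the Deutsch--Sagan characterization \cite{deutschsagan}, which suffices for the same conclusion.
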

\begin{proof}
The Motzkin numbers satisfy $2M_n = 3T_n + 2T_{n+1} - T_{n+2}$ where $T_n = ct\left[(x^{-1} + 1 + x)^n\right]$ (see the next section or \cite{motzkincentralid}), so the theorem applies for all primes $p > 2$ because $M_n\equiv M_{p,n} = 2^{-1}(3T_n + 2T_{n+1} - T_{n+2})\mod p$. For $p=2$, one can prove uniform recurrence directly from the automaton of figure 1 in \cite{CongruenceAutomaton} which shows that, ignoring the least significant digit, the value of $M_n\mod 2$ is determined by the position of the first (from the right) $0$ in $(n)_2$. So if $w = (M_n\mod 2)(M_{n+1}\mod 2)\cdots(M_{n+\ell - 1}\mod 2)$, then we can let $\Delta$ be one of $2^{\lfloor\log_2\ell\rfloor + 1}$ or $2^{\lfloor\log_2\ell\rfloor + 2}$ and at least one of these will yield $w = (M_{n+\Delta}\mod 2)\cdots(M_{n+\Delta+\ell - 1}\mod 2)$, as desired.\\

Lastly, Proposition \ref{density} applies as well, which completes the corollary.
\end{proof}

The fact that the Motzkin numbers have an identity in terms of the central trinomial coefficients is no coincidence, and we detail this connection in the following section.

\section{A Family of Applicable Sequences}\label{AppSeq}
We now generalize our results for the Motzkin numbers slightly to sequences of the form $ct\left[P(x)^nQ(x)\right]$ where $P$ is of the (symmetric) form $\alpha_1x^{-1} + \alpha_0 + \alpha_1x$ and $Q$ is any Laurent polynomial. For example, $P(x) = x^{-1} + 1 + x$ with $Q(x) = 1-x^2$ gives us the Motzkin numbers (in fact, any symmetric $P$ with this $Q(x)$ gives a weighted Motzkin sequence \cite{WeightedMotzkin}), whereas the same $P$ with $Q(x) = 1-x$ gives the Riordan numbers, A005043 of \cite{oeis}, and $P(x) = x^{-1} + 2 + x^2$ with $Q(x) = 1-x$ gives the Catalan numbers, A000108 of \cite{oeis}.

\begin{prop}
If $a_n = ct\left[P(x)^n\right]$ where $P(x) = \alpha_1x^{-1} + \alpha_0 + \alpha_1x$ and $Q(x)$ is any Laurent polynomial, then for $p>2$, $b_n = ct\left[P(x)^nQ(x)\right]$ is uniformly recurrent mod $p$ if and only if $p$ does not divide any $a_n$ (which can be checked for $n < p$).
\end{prop}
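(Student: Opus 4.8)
The plan is to reduce $b_n = ct\left[P(x)^nQ(x)\right]$ modulo $p$ to an integral linear combination of the shifts $a_{n+i} = ct\left[P(x)^{n+i}\right]$ and then quote Theorem \ref{main} verbatim. The entire content of the proposition is therefore the reduction: once I have $b_n \equiv \sum_{i=0}^{K} c_i a_{n+i} \pmod p$ with $c_i \in \Z$, Theorem \ref{main} says this combination is uniformly recurrent mod $p$ exactly when $p$ divides no $a_n$ (checkable for $n<p$ by Proposition \ref{lucas}), and since $b_n$ agrees with it mod $p$ the claim follows immediately.

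To carry out the reduction, write $Q(x) = \sum_k q_k x^k$ and set $c_k(n) = ct\left[P(x)^n x^k\right]$, so that $b_n = \sum_k q_k c_k(n)$ and $c_0(n) = a_n$. Because $P$ is symmetric ($P(x^{-1}) = P(x)$), the substitution $x \mapsto x^{-1}$ fixes $P(x)^n$, sends $x^k \mapsto x^{-k}$, and preserves constant terms, so $c_{-k}(n) = c_k(n)$; hence I may regroup $b_n = q_0 c_0(n) + \sum_{k \ge 1}(q_k + q_{-k})\,c_k(n)$ and work only with $k \ge 0$. Expanding $P(x)^{n+1} = P(x)^n P(x)$ and reading off the coefficient of $x^{-k}$ gives the three-term relation
\begin{equation*}
c_k(n+1) = \alpha_1 c_{k-1}(n) + \alpha_0 c_k(n) + \alpha_1 c_{k+1}(n).
\end{equation*}
For $k=0$ this reads $a_{n+1} = 2\alpha_1 c_1(n) + \alpha_0 a_n$ (using $c_{-1}=c_1$), so $c_1(n) = (2\alpha_1)^{-1}(a_{n+1} - \alpha_0 a_n)$; for $k \ge 1$ it rearranges to $c_{k+1}(n) = \alpha_1^{-1}\big(c_k(n+1) - \alpha_0 c_k(n)\big) - c_{k-1}(n)$. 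A routine induction on $k$ then exhibits each $c_k(n)$ as a fixed $\Z[\tfrac12,\tfrac{1}{\alpha_1}]$-linear combination of $a_n, a_{n+1}, \ldots, a_{n+k}$. Summing against the coefficients of $Q$ and reducing modulo $p$ (where $p>2$ makes $2$ invertible, and $p \nmid \alpha_1$ makes $\alpha_1$ invertible) writes $b_n \equiv \sum_{i=0}^{K} c_i a_{n+i} \pmod p$ with integer representatives $c_i$, which is exactly the hypothesis of Theorem \ref{main}.

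The crux I anticipate is precisely this inversion of $2\alpha_1$: it is where the hypothesis $p>2$ is genuinely forced, and it is the reason symmetry of $P$ is essential, since the identity $c_{-k}=c_k$ is what lets the recurrence close up and keeps the coefficients in $\Z[\tfrac12,\tfrac1{\alpha_1}]$. The one remaining wrinkle is the degenerate residue case $p \mid \alpha_1$, where the recurrence collapses; there $P(x) \equiv \alpha_0 \pmod p$ is constant, so $a_n \equiv \alpha_0^n$ and $b_n \equiv ct[Q]\,\alpha_0^n \pmod p$, and I would check the claim by hand: if $p \nmid \alpha_0$ then no $a_n$ vanishes and $b_n$ is purely periodic (hence uniformly recurrent), while if $p \mid \alpha_0$ then $p \mid a_1$ and $b_n$ is eventually $0$ after a possibly nonzero $b_0$, which fails uniform recurrence, so the equivalence persists. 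Conceptually, writing $P(x) = \alpha_1(x + x^{-1}) + \alpha_0$ and expanding $x^k + x^{-k}$ as a degree-$k$ polynomial in $x + x^{-1} = \alpha_1^{-1}(P-\alpha_0)$ is the Chebyshev identity underlying the induction; it makes transparent why powers of $(2\alpha_1)^{-1}$ appear, and hence why $p>2$ together with $p \nmid \alpha_1$ is all that is needed.
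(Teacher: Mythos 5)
Your proposal is correct, and its overall strategy is the same as the paper's: reduce $b_n$ modulo $p$ to an integral linear combination of the shifts $a_{n+i}$ using the symmetry $c_{-k}(n)=c_k(n)$ and the invertibility of $2\alpha_1$, quote Theorem \ref{main}, and treat $p\mid\alpha_1$ as a degenerate case. The engine of the induction differs, though: the paper's key lemma is the convolution identity $a_{n+i,0}=\sum_{j=-i}^{i}a_{i,j}\,a_{n,j}=a_{i,0}a_{n,0}+\sum_{j=1}^{i}2a_{i,j}a_{n,j}$ (from $P^{n+i}=P^iP^n$, justified pictorially in Figure \ref{fig:triangle}), which is solved for the top coefficient $a_{n,i}$ in one stroke using $a_{i,i}=\alpha_1^i$; you instead use the one-step transfer recurrence $c_k(n+1)=\alpha_1c_{k-1}(n)+\alpha_0c_k(n)+\alpha_1c_{k+1}(n)$ from $P^{n+1}=P\cdot P^n$ and climb in $k$, which is the Chebyshev-style route you name at the end. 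The two inductions are interchangeable and force the same hypotheses ($p>2$ for the single factor of $2^{-1}$ at the $k=0\to1$ step, $p\nmid\alpha_1$ thereafter), so this is a cosmetic rather than substantive divergence. Where you genuinely differ --- to your credit --- is the degenerate case: the paper disposes of $p\mid\alpha_1$ by declaring $b_n\equiv\alpha_0^n\,ct[Q]$ ``periodic (and thus uniformly recurrent),'' which is only right when $p\nmid\alpha_0$; you correctly split off the subcase $p\mid\alpha_0$, where the sequence is $ct[Q],0,0,\ldots$ and (when $ct[Q]\not\equiv0$) fails uniform recurrence, consistently with $p\mid a_1$ on the other side of the equivalence. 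One residual blemish in your write-up: if $ct[Q]\equiv0\bmod p$ in that subcase, the sequence is identically zero and \emph{is} uniformly recurrent even though $p\mid a_1$, so your sentence ``fails uniform recurrence'' needs the proviso $ct[Q]\not\equiv0$; this trivial-combination corner is a defect of the stated equivalence itself (it equally afflicts Theorem \ref{main} when all $c_i\equiv0\bmod p$, via Proposition \ref{zeros}), not of your reduction.
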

\begin{proof}
In view of Theorem \ref{main}, it will be sufficient to find an integral linear combination of the $a_{n+i}$ which yields a sequence congruent to $b_n$ mod $p$.\\

Let $a_{n,i} = ct\left[P(x)^nx^i\right]$ which is the same as the coefficient on $x^i$ (or $x^{-i}$) in $P(x)^n$ (so $a_{n,0} = a_n$). Notice that $a_{n+i,0} = \sum_{j=-i}^ia_{i,j}\cdot a_{n, j} = a_{i,0}\cdot a_{n,0} + \sum_{j=1}^i2a_{i,j}\cdot a_{n,j}$ (see Figure \ref{fig:triangle} to see where this identity comes from). This along with the fact that $a_{i,i} = \alpha_1^i$ yields $2\alpha_1^i\cdot a_{n,i} = a_{n+i, 0} - a_{i,0}\cdot a_{n,0} - \sum_{j=1}^{i-1}2a_{i,j}\cdot a_{n,j}$. Finally, induction applied to $a_{n,j}$ with $j<i$ using this equality shows that if $p\nmid \alpha_1$, then $a_{n,i}$ can be written as linear combination of $a_{n,0},a_{n+1,0},\ldots,a_{n+i, 0}$ over $\F_p$ (since $2$ and $\alpha_1$ are units). In fact, if $\alpha_1 = 1$ then we even get that $2\cdot a_{n,i}$ can be written outright as an integral linear combination in this way. For example, if $P(x) = x^{-1} + 1 + x$ and $Q(x) = 1-x^2$, we get an identity for the Motzkin numbers in terms of the central trinomial coefficients by finding an identity for $a_{n,2}$ (and $a_{n,0}$) in terms of central coefficients (see the example below).\\

\begin{figure}[h]
    \centering
    \includegraphics[width=0.5\textwidth]{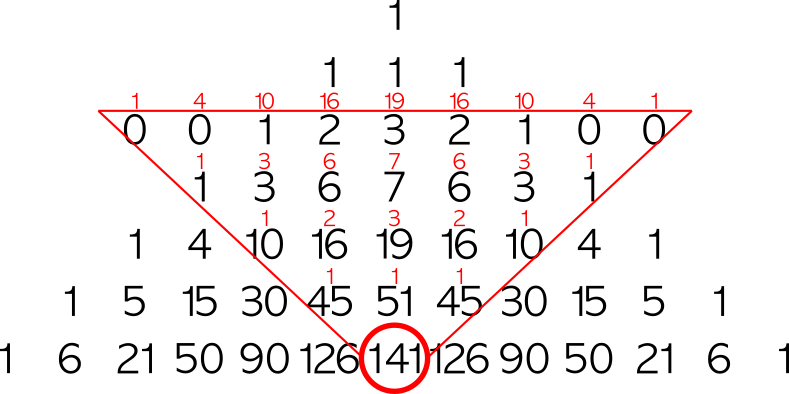}
    \caption{A demonstration of why $a_{n+i,0} = \sum_{j=-i}^ia_{i,j}\cdot a_{n, j}$ when $\alpha_0 = \alpha_1 = 1$. The small red numbers count the number of contributions of each number in a row to the circled $141$.}
    \label{fig:triangle}
\end{figure}

Thus, in the case that $p\nmid \alpha_1$, if $Q(x) = \sum_{j\in\Z} c_jx^j$ then $b_n = ct\left[P(x)^nQ(x)\right] = \sum_{j\in\Z} c_j a_{n,-j}$ which is congruent mod $p$ to a linear combination of $a_{n+i}$'s ($0\leq i\leq \max(\deg Q(x), \deg Q(x^{-1}))$), and so Theorem \ref{main} applies.\\

On the other hand, if $\alpha_1\equiv 0\mod p$, then we simply get $b_n \equiv ct\left[\alpha_0^nQ(x)\right] = \alpha_0^n\cdot ct\left[Q(x)\right]\mod p$, which is periodic (and thus uniformly recurrent).\\

In either case, for any $p>2$, we have that $b_n$ will be uniformly recurrent.
\end{proof}

\begin{example}
Let's show where the identity, $2M_n = 3T_n + 2T_{n+1} - T_{n+2}$, that we have been using to apply our results to the Motzkin numbers comes from. First note that $M_n = ct\left[(x^{-1} + 1 + x)^n(1-x^2)\right] = ct\left[(x^{-1} + 1 + x)^n\right] - ct\left[(x^{-1} + 1 + x)^nx^2\right] = a_{n,0} - a_{n,2}$. Thus, if we let $T_n = ct\left[(x^{-1} + 1 + x)^n\right] = a_{n,0}$, $A_n = ct\left[(x^{-1} + 1 + x)^nx\right] = a_{n,1}$ and $B_n = ct\left[(x^{-1} + 1 + x)^nx^2\right] = a_{n,2}$, then we can use that $T_{n+2} = 3T_n + 4A_n + 2B_n$ and $T_{n+1} = T_n + 2A_n$ (see figure \ref{fig:triangle} for justification) to see that $3T_n + 2T_{n+1} - T_{n+2} = 2(T_n - B_n) = 2M_n$.
\end{example}

\bibliography{UniformRecurrenceInTheMotzkinNumbers}{}
\bibliographystyle{plain}

\end{document}